\documentclass[12pt, a4paper]{article}

\usepackage[latin2]{inputenc}
\usepackage{amssymb}
\usepackage{paralist}
\usepackage{amsmath, calc}
\usepackage{amsfonts}
\usepackage{amsthm}
\usepackage{fullpage}
\usepackage{enumerate}
\usepackage{paralist}

\newtheorem{theorem}{Theorem}[section] %
\newtheorem{lemma}[theorem]{Lemma} %

\usepackage{cite}

\begin{document}

\title{Unique representations of integers by linear forms}

\author{
S\'andor Z. Kiss \thanks{Department of Algebra, Institute of Mathematics, Budapest University of
Technology and Economics, M\H{u}egyetem rkp. 3., H-1111, Budapest, Hungary. Email: ksandor@math.bme.hu.
This author was supported by the NKFIH Grant No. K129335.}, Csaba
S\'andor \thanks{Department of Stochastics, Institute of Mathematics, Budapest University of
Technology and Economics, M\H{u}egyetem rkp. 3., H-1111, Budapest, Hungary. Department of Computer Science and Information Theory, Budapest University of Technology and Economics, M\H{u}egyetem rkp. 3., H-1111 Budapest, Hungary, MTA-BME Lend\"ulet Arithmetic Combinatorics Research Group,
  ELKH, M\H{u}egyetem rkp. 3., H-1111 Budapest, Hungary . Email: csandor@math.bme.hu.
This author was supported by the NKFIH Grant No. K129335.}
}

\date{}

\maketitle

\begin{abstract}
\noindent Let $k\ge 2$ be an integer and let $A$ be a set of nonnegative integers. For a $k$-tuple of positive integers $\underline{\lambda} = (\lambda_{1}, \dots{} ,\lambda_{k})$ with $1 \le \lambda_{1} < \lambda_{2} < \dots{} < \lambda_{k}$, we define the additive representation function $R_{A,\underline{\lambda}}(n) 
= |\{(a_{1}, \dots{} ,a_{k})\in A^{k}: \lambda_{1}a_{1} + \dots{} + \lambda_{k}a_{k} = n\}|$. 
For $k = 2$, Moser constructed a set $A$ of nonnegative integers such that $R_{A,\underline{\lambda}}(n) = 1$
holds for every nonnegative integer $n$. In this paper we characterize all the $k$-tuples $\underline{\lambda}$ and the sets $A$ of nonnegative integers with $R_{A,\underline{\lambda}}(n) = 1$ for every integer $n\ge 0$.

 {\it 2010 Mathematics Subject Classification:} 11B34

{\it Keywords and phrases:} additive number theory; additive representation function; linear forms  
\end{abstract}

\section{Introduction}
Let $\mathbb{N}$ be the set of nonnegative integers, $\mathbb{Z}^{+}$ the set of positive integers. For an integer $k\ge 2$ and a set $A$ of nonnegative integers, let $R_{A,k}(n)$ and $R_{A,k}^{\le}(n)$ be the number of solutions of the equations 
\[
a_{1} + \dots{} + a_{k} = n, \hspace*{5mm} a_{1}, \dots{} ,a_{k}\in A,
\]
\[
a_{1} + \dots{} + a_{k} = n, \hspace*{5mm} a_{1}, \dots{} ,a_{k}\in A, \hspace*{5mm} a_{1} \le \dots{} \le a_{k}.
\]
Furthermore, consider the $k$-tuples of positive integers $\underline{\lambda} = (\lambda_{1}, \dots{} ,\lambda_{k})$ with $1 \le \lambda_{1} \le \lambda_{2} \le \dots{} \le \lambda_{k}$. The additive representation function associated to linear forms is defined by
\[
R_{A,\underline{\lambda}}(n) = |\{(a_{1}, \dots{} ,a_{k})\in A^{k}: \lambda_{1}a_{1} + \dots{} + \lambda_{k}a_{k} = n\}|,
\]
where $|.|$ denotes the cardinality of a set. 

It is easy to see that the representation function $R_{A,2}(n)$ cannot be constant from a certain point on. On the other hand, Dirac proved \cite{gd} that $R_{A,2}^{\le}(n)$ also cannot be constant by using an analytic argument. In 1962, Moser \cite{lm} constructed a set $A$ of nonnegative integers with $R_{A,\underline{\lambda}}(n) = 1$ for every nonnegative integer $n$, where $k = 2$, $\underline{\lambda} = \{1, \lambda_{2}\}$, $\lambda_{2} \ge 2$.
Later, A. S\'ark\"ozy and V.T. S\'os asked \cite{st} how could be extended the above result to representation functions of linear forms, i.e., for which $k$-tuples $\underline{\lambda}$ there exists an infinite set $A$ of nonnegative integers such that $R_{A,\underline{\lambda}}(n) = 1$ from a certain point on.
Cilleruelo and Ru\'e showed  \cite{cr} that there does not exist an infinite set $A$ of nonnegative integers such that  $R_{A,\underline{\lambda}}(n)$ is a constant for every nonnegative integer $n$ large enough, where $k = 2$, $\underline{\lambda} = (\lambda_{1}, \lambda_{2})$ with $2 \le \lambda_{1} \le \lambda_{2}$. In \cite{jr}, Ru\'e partially answered the above question of A. S\'ark\"ozy and V.T. S\'os by proving that if in the $k$-tuples $\underline{\lambda} = (\lambda_{1}, \dots{} ,\lambda_{k})$ each $\lambda_{i} $ appear exactly $m\ge 2$ times, then there does not exist an infinite set $A$ of nonnegative integers such that 
$R_{A,\underline{\lambda}}(n)$ becomes a constant from a certain point on. On the other hand, Ru\'e and Spiegel showed \cite{rs} that if the $\lambda_{i}$'s are pairwise coprime, then there does not exist such a set as well. More precisely, they proved that if each $\lambda_{i}$ is a product of elements of some set of pairwise coprime positive integers, then such a set $A$ cannot exist.

In this paper we give a complete description of the $k$-tuples $\underline{\lambda}$ and the sets $A$ of nonnegative integers such that $R_{A,\underline{\lambda}}(n) = 1$ for every nonnegative integer $n$.

Let $u_{1}, u_{2}, \dots{} ,v_{1}, v_{2}, \dots{} \in \mathbb{Z}^{+}$ and define $U_{i} = u_{1}u_{2} \cdots{} u_{i}$, $V_{i} = v_{1}v_{2} \cdots{} v_{i}$. Let
\[
S(u_{1}, v_{1}, u_{2}, v_{2}, \dots{} ,u_{k-1}, v_{k-1}, u_{k}) = 
\]
\[
= \{i_{0} + i_{1}U_{1}V_{1} + i_{2}U_{2}V_{2} + \dots{} + i_{k-1}U_{k-1}V_{k-1}: 0\le i_{h} < u_{h+1}, h = 0,1, \dots{} ,k-1\}, 
\]
\[
S(u_{1}, v_{1}, u_{2}, v_{2}, \dots{}) = \bigcup_{k=1}^{\infty}S(u_{1}, v_{1}, u_{2}, v_{2}, \dots{} ,u_{k-1}, v_{k-1}, u_{k}), 
\]
\[
T(u_{1}, v_{1}, u_{2}, v_{2}, \dots{} ,u_{k}, v_{k}) = 
\]
\[
= \{j_{0}U_{1} + j_{1}U_{2}V_{1} + j_{2}U_{3}V_{2} + \dots{} + j_{k-1}U_{k}V_{k-1}: 0\le j_{h} < v_{h+1}, h = 0,1, \dots{} ,k-1\},
\]
\[
T(u_{1}, v_{1}, u_{2}, v_{2}, \dots{}) = \bigcup_{k=1}^{\infty}T(u_{1}, v_{1}, u_{2}, v_{2}, \dots{} ,u_{k-1}, v_{k-1}, u_{k}, v_{k}) 
\]
and
\[
T(u_{1}, v_{1}, u_{2}, v_{2}, \dots{} ,u_{k}, \infty) = 
\]
\[
= \{j_{0}U_{1} + j_{1}U_{2}V_{1} + j_{2}U_{3}V_{2} + \dots{} + j_{k-1}U_{k}V_{k-1}: 0\le j_{h} < v_{h+1}, h = 0,1, \dots{} ,k-2, j_{k-1}\in \mathbb{N}\}.
\]

Let $\lambda \ge 2$ be an integer. Let 
\[
\Lambda_{\lambda}(u_{1}, v_{1}, u_{2}, v_{2}, \dots{} ,u_{k-1}, v_{k-1}, u_{k}) = \{\lambda^{s}: s\in S(u_{1}, v_{1}, u_{2}, v_{2}, \dots{} ,u_{k-1}, v_{k-1}, u_{k})\},
\]
\[
\Lambda_{\lambda}(u_{1}, v_{1}, u_{2}, v_{2}, \dots{}) = \{\lambda^{s}: s\in S(u_{1}, v_{1}, u_{2}, v_{2}, \dots{})\},
\]
\[
A_{\lambda}(u_{1}, v_{1}, u_{2}, v_{2}, \dots{} ,u_{k},v_{k}) = \left \{\sum_{t\in T(u_{1}, v_{1}, u_{2}, v_{2}, \dots{} ,u_{k}, v_{k})}\delta_{t}\lambda^{t}: 0\le \delta_{t} < \lambda \right\},
\]
\[
A_{\lambda}(u_{1}, v_{1}, u_{2}, v_{2}, \dots{}) = \left \{\sum_{t\in T(u_{1}, v_{1}, u_{2}, v_{2}, \dots{} )}\delta_{t}\lambda^{t}: 0\le \delta_{t} < \lambda \right\},
\]
and
\[
A_{\lambda}(u_{1}, v_{1}, u_{2}, v_{2}, \dots{} ,u_{k}, \infty) = \left\{\sum_{t\in T(u_{1}, v_{1}, u_{2}, v_{2}, \dots{} ,u_{k}, \infty)}\delta_{t}\lambda^{t}: 0\le \delta_{t} < \lambda \right\}.
\]

\begin{theorem}
Let $\Lambda = \{\lambda_{1}, \dots{} ,\lambda_{m}\}\subset \mathbb{Z}^{+}$ with $1\le \lambda_{1} < \dots{} < \lambda_{m}$ and let
$\underline{\lambda} = (\lambda_{1}, \dots{} ,\lambda_{m})$. Then for a proper subset $A$ of $\mathbb{N}$, one has $R_{A,\underline{\lambda}}(n) = 1$ for every nonnegative integer $n$ if and only if there exists an integer $\lambda \ge 2$ and integers $u_{1}, v_{1}, u_{2}, v_{2}, \dots{} ,u_{k-1}, v_{k-1}, u_{k} \ge 2$ such that 
\[
\Lambda  = \Lambda_{\lambda}(u_{1}, v_{1}, u_{2}, v_{2}, \dots{} ,u_{k-1}, v_{k-1}, u_{k})
\]
and
\[
A = A_{\lambda}(u_{1}, v_{1}, u_{2}, v_{2}, \dots{} ,u_{k-1}, v_{k-1}, u_{k}, \infty).
\]
\end{theorem}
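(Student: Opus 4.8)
The plan is to translate the statement into a single generating-function identity. Writing $f(x)=\sum_{a\in A}x^{a}$ for the formal indicator series of $A$, the condition $R_{A,\underline\lambda}(n)=1$ for all $n\ge 0$ is equivalent to
\[
\prod_{i=1}^{m}f\!\left(x^{\lambda_{i}}\right)=\frac{1}{1-x}\qquad\text{in }\mathbb{Z}[[x]],
\]
because the coefficient of $x^{n}$ on the left is exactly $R_{A,\underline\lambda}(n)$. The natural building blocks are the digit polynomials $P_{j}(x)=1+x^{\lambda^{j}}+\cdots+x^{(\lambda-1)\lambda^{j}}=\frac{1-x^{\lambda^{j+1}}}{1-x^{\lambda^{j}}}$, whose telescoping product $\prod_{j\ge 0}P_{j}(x)=\frac{1}{1-x}$ is just uniqueness of base-$\lambda$ representation. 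In this language comparing constant terms forces $0\in A$, so the right-hand side is well defined, and $f$ is a $0/1$-series with $f(0)=1$.

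For the ``if'' direction I would first isolate the combinatorial core as a lemma: $\mathbb{N}=S\oplus T$, i.e. every nonnegative integer is uniquely $s+t$ with $s\in S(u_{1},\dots,u_{k})$ and $t\in T(u_{1},\dots,u_{k},\infty)$. This holds because, in the mixed-radix system with radices $u_{1},v_{1},u_{2},v_{2},\dots$, the set $S$ consists precisely of the integers supported on the even digit-positions $0,2,\dots,2k-2$, while $T$ consists of those vanishing there; since the two families of positions are disjoint and each digit respects its radix bound, uniqueness of mixed-radix expansions yields the direct-sum decomposition. Granting this, the base-$\lambda$ description of $A_{\lambda}(\dots,\infty)$ gives $f(x)=\prod_{t\in T}P_{t}(x)$, hence $f(x^{\lambda^{s}})=\prod_{t\in T}P_{s+t}(x)$; multiplying over $s\in S$ and using that $(s,t)\mapsto s+t$ is a bijection onto $\mathbb{N}$, the product telescopes to $\prod_{n\ge 0}P_{n}(x)=\frac{1}{1-x}$, as required.

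The substance is the ``only if'' direction. Comparing coefficients of $x^{1}$ forces $\lambda_{1}=1$ and $1\in A$; since a proper $A$ excludes $\Lambda=\{1\}$, we have $m\ge 2$ and may set $\lambda:=\lambda_{2}\ge 2$. Examining $n<\lambda$ gives $\{0,1,\dots,\lambda-1\}\subseteq A$, and then $R_{A,\underline\lambda}(\lambda)=1$ (realised by the representation using $\lambda_{2}=\lambda$) gives $\lambda\notin A$. I would then run a strong induction that peels off one level at a time: read $u_{1}$ as the length of the initial run of consecutive powers $1,\lambda,\dots,\lambda^{u_{1}-1}$ lying in $\Lambda$, read $v_{1}$ from the next element of $\Lambda$ (which must be $\lambda^{u_{1}v_{1}}$), and show that $R\equiv 1$ forces a matching rigidity on $A$. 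The target reduction rebases by $\lambda\mapsto\lambda^{u_{1}v_{1}}$, exploiting the multiplicative splitting $\Lambda=\{1,\lambda,\dots,\lambda^{u_{1}-1}\}\cdot\Lambda_{\lambda^{u_{1}v_{1}}}(u_{2},\dots,u_{k})$ together with the corresponding block structure of $A$, to produce a strictly smaller instance $(\lambda^{u_{1}v_{1}};u_{2},v_{2},\dots,u_{k})$ of the same form, to which the induction hypothesis applies; the terminal $\infty$ is forced by requiring $R(n)=1$ for arbitrarily large $n$.

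The main obstacle is precisely this inductive rigidity: proving that the single hypothesis $R_{A,\underline\lambda}\equiv 1$ already forces $\Lambda$ to consist of powers of $\lambda=\lambda_{2}$ and forces $A$, in base $\lambda$, to be supported on the prescribed interleaved set of digit-positions. The delicate points are establishing the divisibility $\lambda\mid\lambda_{i}$ and the block decomposition $A=\{0,\dots,\lambda-1\}\oplus\lambda B$ \emph{before} one knows that $\Lambda$ consists of powers of $\lambda$, so that the factorization and rescaling are legitimate; here one must be careful, since a naive one-level strip $x\mapsto x^{\lambda}$ entangles $A$ with its own higher part $B$ (the resulting identity $\frac{1}{1-y}=g(y)\prod_{i\ge 2}f(y^{\lambda_{i}/\lambda})$ mixes $g=\sum_{b\in B}y^{b}$ with $f$), which is why one must peel a full pair $(u_{1},v_{1})$ and verify that the rebased data again form an initial run of powers so that the alternation $u_{1},v_{1},u_{2},v_{2},\dots$ emerges and the induction genuinely closes.
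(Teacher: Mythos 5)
Your ``if'' direction is correct and is, in substance, identical to the paper's Lemma 2.1: your direct-sum decomposition $\mathbb{N}=S\oplus T$ (mixed-radix digits at the even positions versus the odd positions, with the top digit unbounded) is exactly the paper's observation that every $m$ is uniquely $s+t$ with $s\in S$, $t\in T$, and your telescoping identity $\prod_{s\in S}f(x^{\lambda^{s}})=\prod_{n\ge 0}P_{n}(x)=\frac{1}{1-x}$ is a clean generating-function repackaging of it (the paper instead proves existence of representations below the threshold and gets uniqueness from the count $|A|^{|\Lambda|}=\lambda^{U_{k}V_{k-1}}$ on the finite truncations $A_{\lambda}(\dots,u_{k},v)$, then lets $v\to\infty$). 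Your preliminary normalizations ($a_{1}=0$, $\lambda_{1}=1$, $1\in A$, $m\ge 2$, $[0,\lambda_{2})\subseteq A$, $\lambda_{2}\notin A$) are also correct and match the opening of the paper's necessity argument.

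The genuine gap is the rest of the necessity direction: everything from ``I would then run a strong induction'' onward is a statement of the goal rather than an argument, and the step you yourself flag as ``the main obstacle'' --- that $R\equiv 1$ forces the next element of $\Lambda$ beyond the run $1,\lambda,\dots,\lambda^{u_{1}-1}$ to be $\lambda^{u_{1}v_{1}}$ and forces the block structure of $A$ --- is the entire content of the theorem and is left unproved. Your proposed induction also has an unaddressed transfer problem: even granting the factorizations $f(x)=g(x)\,h\bigl(x^{\lambda^{u_{1}v_{1}}}\bigr)$ and $\Lambda=\{1,\dots,\lambda^{u_{1}-1}\}\cdot\Lambda'$, you must still deduce $R_{A',\underline{\lambda}'}\equiv 1$ for the rebased pair before the induction hypothesis applies (the identity $\frac{1}{1-x}=\prod_{i}f(x^{\lambda_{i}})$ does not split without the very rigidity in question), and the parameters $u_{i},v_{i}$ are not known in advance but must be discovered adaptively. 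The paper supplies precisely the missing machinery, and not by rebasing: its Lemma 2.2 converts any coincidence $\sum_{i\in I}\mu_{i}s^{(i)}=\sum_{j\in J}\mu_{j}s^{(j)}$ with distinct index sets into $R(N)\ge 2$ by padding with zeros (this is where $0\in A$ is essential), and Claims 1 and 2 then march through the intervals $[\lambda^{uU_{h}V_{h}},\lambda^{(u+1)U_{h}V_{h}})$ by a minimal-counterexample argument: at the first $n$ where $A$ or $\Lambda$ deviates from the model sets, each of four cases produces two representations of some $N$, the hardest (when $n$ lies in the model set \emph{and} in $\Lambda$, resp.\ $A$) requiring a division-with-remainder construction that represents $\lambda^{vU_{h+1}V_{h}+1}$ both via $\lfloor\lambda^{vU_{h+1}V_{h}+1}/n\rfloor\cdot n$ plus a remainder and as $\lambda_{2}\cdot\lambda^{vU_{h+1}V_{h}}$. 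The alternation that generates $u_{1},v_{1},u_{2},\dots$ is obtained by showing each boundary power lies in $A\cup\Lambda$ but not in $A\cap\Lambda$ --- comparing the smallest integer not representable by the truncated system with the two candidate extensions $\lambda_{1}a_{t+1}$ and $\lambda_{z+1}a_{2}$ --- and finiteness of $\Lambda$ forces the terminal $\infty$. Until you prove statements of this strength, your outline does not close.
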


In Theorem 1.1, the set $\Lambda$ is finite. In the following two theorems the set $\Lambda$ is infinite.

\begin{theorem}
Let $\Lambda = \{\lambda_{1}, \lambda_{2}, \dots{}\}\subset \mathbb{Z}^{+}$ be an infinite set with $1\le \lambda_{1} < \lambda_{2} < \dots{}$ and let $A = \{a_{1}, a_{2}, \dots{}\}\subset \mathbb{N}$ with $0\le a_{1} < a_{2} < \dots{}$. For every $n\in \mathbb{N}$, there exists a unique vector of positive integers $(i_{n,1}, i_{n,2}, \dots{})$ such that
\[
n = \sum_{j=1}^{\infty}\lambda_{j}a_{i_{{n,j}}}
\]
if and only if there exist an integer $\lambda \ge 2$ and $u_{1}, v_{1}, u_{2}, v_{2}, \dots{} \ge 2$ integers such that 
\[
\Lambda  = \Lambda_{\lambda}(u_{1}, v_{1}, u_{2}, v_{2}, \dots{})
\]
and
\[
A = A_{\lambda}(u_{1}, v_{1}, u_{2}, v_{2}, \dots{}).
\]
\end{theorem}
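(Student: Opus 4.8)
\section*{Proof proposal for Theorem 1.2}

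The plan is to reformulate the unique representation property as a single multiplicative identity and then read off the structure from it. Writing $P(x)=\sum_{a\in A}x^{a}$, the condition that every $n\in\mathbb N$ has exactly one representation $n=\sum_{j\ge 1}\lambda_j a_{i_{n,j}}$ is equivalent to the formal power series identity
\[
\prod_{j=1}^{\infty}P\!\left(x^{\lambda_j}\right)=\frac{1}{1-x}.
\]
For this product (and the defining sum) to make sense one first notes that $a_1=0$ is forced, so that $P(x)=1+\sum_{a\in A,\,a>0}x^{a}$ and, since $\lambda_j\to\infty$, only finitely many factors contribute beyond their constant term $1$ to the coefficient of any fixed $x^{n}$. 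I would record at the outset the elementary consequences of the identity: $a_1=0$, $\lambda_1=1$, $a_2=1$, and, setting $\lambda:=\lambda_2$, the two facts $\lambda\ge 2$ and $\lambda=\min(\mathbb N\setminus A)$, so that $\{0,1,\dots,\lambda-1\}\subseteq A$ while $\lambda\notin A$. (The last point comes from comparing the two candidate representations $\lambda=\lambda_2\cdot a_2$ and $\lambda=\lambda_1\cdot\lambda$ of the integer $\lambda$.)

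For the ``if'' direction I would argue structurally. The key observation is that $u_1,v_1,u_2,v_2,\dots$ are exactly the digit bases of a mixed--radix system on $\mathbb N$: with bases $u_1,v_1,u_2,v_2,\dots$ the weight of the even position $2h$ is $U_hV_h$ and that of the odd position $2h+1$ is $U_{h+1}V_h$, precisely the coefficients appearing in the definitions of $S$ and $T$. Since every nonnegative integer has a unique finite mixed--radix expansion, splitting its digits into even and odd positions yields a direct sum $\mathbb N=S\oplus T$, i.e. every exponent $e\ge 0$ is uniquely $e=s+t$ with $s\in S$, $t\in T$. Given $n$, I would take its base-$\lambda$ digits $c_e$ ($0\le c_e<\lambda$) and set, for each $s\in S$, $a^{(s)}:=\sum_{t\in T}c_{s+t}\lambda^{t}\in A_{\lambda}(u_1,v_1,\dots)$; then $n=\sum_{e}c_e\lambda^{e}=\sum_{s\in S}\lambda^{s}a^{(s)}$, and uniqueness is immediate from the uniqueness of the base-$\lambda$ expansion together with $\mathbb N=S\oplus T$. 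Equivalently one verifies the product identity by factoring $\frac{1}{1-x}=\prod_{e\ge0}\frac{1-x^{\lambda^{e+1}}}{1-x^{\lambda^{e}}}$ and regrouping the factors according to $e=s+t$.

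For the ``only if'' direction I would phrase the hypothesis as an internal direct sum $\mathbb N=\bigoplus_{j\ge1}\lambda_j A$ and peel it from the bottom. Because $\{0,\dots,\lambda-1\}\subseteq A$ and the least positive element of $B:=\bigoplus_{j\ge2}\lambda_jA$ equals $\lambda$, the complementing pair $\mathbb N=A\oplus B$ should force $B\subseteq\lambda\mathbb Z$ and a factorization $A=\{0,\dots,\lambda-1\}\oplus\lambda A^{*}$ with $\mathbb N=A^{*}\oplus(B/\lambda)$; since $\lambda_j\in B$ for every $j\ge2$, this yields $\lambda\mid\lambda_j$ for all $j$ and, after dividing out, a structurally identical smaller instance. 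Iterating the peeling --- alternately exhausting a maximal geometric run $1,\lambda,\dots,\lambda^{u_1-1}$ in $\Lambda$ (which fixes $u_1$) and a maximal block in the support of $A$ (which fixes $v_1$) --- would reconstruct $\lambda$ and the parameters $u_1,v_1,u_2,v_2,\dots\ge2$ and show at the same time that every $\lambda_j$ is a power of $\lambda$ and that $A$ is the corresponding $\lambda$-digit set, i.e. $\Lambda=\Lambda_\lambda(u_1,v_1,\dots)$ and $A=A_\lambda(u_1,v_1,\dots)$.

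The main obstacle is exactly this peeling lemma: proving that $\mathbb N=A\oplus B$ together with $\{0,\dots,\lambda-1\}\subseteq A$, $\lambda\notin A$ and $\min(B\setminus\{0\})=\lambda$ forces $B\subseteq\lambda\mathbb Z$, and that the reduced pair again enjoys the unique representation property. This is where the infinitude of $\Lambda$ must be handled with care: one cannot simply truncate the form, so the argument has to propagate the digitwise base-$\lambda$ rigidity through all scales simultaneously. I expect this to be the delicate, calculation-heavy step, and the natural place either to invoke the classical structure theorem for complementing pairs of $\mathbb N$ (merging maximal equal-coloured blocks of mixed--radix positions to produce the alternating $u_i,v_i\ge2$) or to reprove the relevant portion of it in the present setting, paralleling the finite case of Theorem~1.1.
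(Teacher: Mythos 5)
Your ``if'' direction is correct and is essentially the paper's own argument: the direct-sum decomposition $\mathbb{N}=S\oplus T$ of the exponents via the mixed-radix system with bases $u_{1},v_{1},u_{2},v_{2},\dots$ and the regrouping of base-$\lambda$ digits is exactly the computation in Lemma 2.1 (the paper runs it on finite truncations and exhausts by a union; doing it directly at infinite scale, as you do, is fine, and your preliminary observations $a_{1}=0$, $\lambda_{1}=1$, $a_{2}=1$, $\lambda_{2}=\lambda=\min(\mathbb{N}\setminus A)$ match the opening of the paper's necessity argument). The genuine gap is the ``only if'' direction, where everything rests on the peeling lemma that you yourself flag as unproven: that $\mathbb{N}=A\oplus B$ with $\{0,\dots,\lambda-1\}\subseteq A$, $\lambda\notin A$, $\min(B\setminus\{0\})=\lambda$ forces $B\subseteq\lambda\mathbb{Z}$ and $A=\{0,\dots,\lambda-1\}\oplus\lambda A^{*}$. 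Even granting this (it is the first step of the classical de Bruijn--Long structure theory of complementing pairs of $\mathbb{N}$), invoking that structure theorem does not finish the proof: it describes the single pair $(A,B)$ as an interleaved mixed-radix system with \emph{arbitrary} bases, whereas the theorem requires every $\lambda_{j}$ to be a power of one fixed $\lambda$, with exponent set exactly $S(u_{1},v_{1},\dots)$, and $A$ to be the base-$\lambda$ digit set over the complementary classes $T$, with all run lengths $u_{i},v_{i}\ge2$. Extracting that from the self-similarity $B=\bigoplus_{j\ge2}\lambda_{j}A$ is precisely the content you omit.

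Moreover, your iteration step is not self-similar in the way you assert. After one peeling, the reduced identity is $\mathbb{N}=A^{*}\oplus\bigoplus_{j\ge2}(\lambda_{j}/\lambda)A$, in which the new bottom set $A^{*}$ is paired with dilates of the \emph{original} $A$ (note $\lambda_{2}/\lambda=1$, so $A$ itself now sits at multiplier $1$); so ``a structurally identical smaller instance'' is false as stated, and the induction must alternate between exhausting a maximal geometric run in $\Lambda$ and a maximal digit block in $A$. This alternation is exactly what the paper implements for Theorem 1.1 --- and the paper explicitly omits the proof of Theorem 1.2, saying it is proved by the same method --- namely: Lemma 2.2 (any coincidence of two $\Lambda$-combinations of nonzero elements of a set containing $0$, padded with $a_{1}=0$, yields some $N$ with $R\ge2$), together with Claims 1 and 2, each proved by taking the minimal $n$ at which $A$ or $\Lambda$ first deviates from $A_{\lambda}(\dots)$ or $\Lambda_{\lambda}(\dots)$ and producing a collision in each of four cases, plus the boundary dichotomy that each critical power $\lambda^{(u+1)U_{h}V_{h}}$ (resp.\ $\lambda^{(v+1)U_{h+1}V_{h}}$) lies in $A\cup\Lambda$ but not in $A\cap\Lambda$, which is what makes the next parameter well defined and keeps the process going. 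Until you either reprove this inductive machinery or establish your peeling lemma in the strengthened self-similar form (including the constancy of the base $\lambda$ across all scales, which a general complementing pair does not enjoy), the necessity direction remains a plan rather than a proof.
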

Note that Theorem 1.2 can be proved similarly as Theorem 1.1, hence we omit the proof and leave the details to the reader. The next theorem follows immediately.
\begin{theorem}
Let $\Lambda = \{\lambda_{1}, \lambda_{2}, \dots{}\}\subset \mathbb{Z}^{+}$ be an infinite set with $1\le \lambda_{1} < \lambda_{2} < \dots{}$ and let $A = \{a_{1}, a_{2}, \dots{}\}\subset \mathbb{Z}^{+}$ with $1\le a_{1} < a_{2} < \dots{}$. For every $n\in \mathbb{Z}^{+}$, there exist unique finite $k_{n}$-tuples of positive integers $(i_{1,n}, \dots{} ,i_{k_{n},n})$ and $(j_{1,n}, \dots{} ,j_{k_{n},n})$, $1 \le j_{1,n} < j_{2,n} < \dots{} 
 < j_{k_{n},n}$ such that
\[
n = \sum_{h=1}^{k_{n}}\lambda_{j_{h,n}}a_{i_{h,n}}
\]
if and only if there exist $\lambda \ge 2$ and $u_{1}, v_{1}, u_{2}, v_{2}, \dots{} ,u_{k}, v_{k} \dots{} \ge 2$ integers such that 
\[
\Lambda  = \Lambda_{\lambda}(u_{1}, v_{1}, u_{2}, v_{2}, \dots{})
\]
and
\[
A = A_{\lambda}(u_{1}, v_{1}, u_{2}, v_{2}, \dots{})\setminus \{0\}.
\]
\end{theorem}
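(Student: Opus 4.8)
The plan is to deduce Theorem~1.3 directly from Theorem~1.2 by passing between the two settings through the element $0$. Given a set $A = \{a_1, a_2, \dots\} \subset \mathbb{Z}^+$ as in Theorem~1.3, I would set $A^{*} = A \cup \{0\}$, so that $A^{*} = \{a^{*}_1, a^{*}_2, \dots\}$ with $a^{*}_1 = 0$ is an admissible set for Theorem~1.2. Conversely, any set $A^{*}$ admissible for Theorem~1.2 must contain $0$: taking $n = 0$ in the representation $0 = \sum_{j} \lambda_j a^{*}_{i_{0,j}}$ forces every term to vanish (since $\lambda_j > 0$ and $a^{*}_{i} \ge 0$), and uniqueness of the representing vector then forces $a^{*}_1 = 0$. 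Thus the sets occurring in Theorem~1.3 are exactly the sets $A^{*} \setminus \{0\}$ with $A^{*}$ occurring in Theorem~1.2, and the two families of $\Lambda$'s coincide.

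Next I would show that for each fixed $n$ the representations in the two theorems are in bijection, so that the uniqueness conditions are equivalent. Starting from a Theorem~1.2 representation $n = \sum_{j=1}^\infty \lambda_j a^{*}_{i_{n,j}}$, only finitely many terms are nonzero; since $A^{*}$ is strictly increasing with $a^{*}_1 = 0$, the condition $a^{*}_{i_{n,j}} = 0$ is equivalent to $i_{n,j} = 1$. Let $j_1 < j_2 < \dots < j_{k_n}$ enumerate the indices $j$ with $i_{n,j} \ge 2$, and retain for each such $j_h$ the value $a^{*}_{i_{n,j_h}} \in A = A^{*} \setminus \{0\}$. This yields a finite representation $n = \sum_{h=1}^{k_n} \lambda_{j_h} a_{i_{h,n}}$ of exactly the shape demanded in Theorem~1.3, with strictly increasing $\lambda$-indices. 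In the opposite direction, a Theorem~1.3 representation with support $\{j_1 < \dots < j_{k_n}\}$ extends to a full Theorem~1.2 representation by assigning the $a^{*}$-value $0$ (index $1$) to every $j \notin \{j_1, \dots, j_{k_n}\}$. Because a strictly increasing $A^{*}$ lets a value determine its index, these two maps are mutually inverse on the level of the full vectors $(i_{n,1}, i_{n,2}, \dots)$, so $n$ has a unique Theorem~1.2 representation with respect to $A^{*}$ if and only if it has a unique Theorem~1.3 representation with respect to $A = A^{*} \setminus \{0\}$.

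Finally, since the only integer treated by Theorem~1.2 but not by Theorem~1.3 is $n = 0$, whose representation is trivially unique (the all-zero vector), the two uniqueness statements hold for all relevant $n$ simultaneously. Applying Theorem~1.2 to $A^{*}$ then gives $\Lambda = \Lambda_\lambda(u_1, v_1, u_2, v_2, \dots)$ and $A^{*} = A_\lambda(u_1, v_1, u_2, v_2, \dots)$ for some $\lambda \ge 2$ and $u_i, v_i \ge 2$; removing the element $0 \in A_\lambda(u_1, v_1, u_2, v_2, \dots)$ (attained by taking all $\delta_t = 0$) yields $A = A_\lambda(u_1, v_1, u_2, v_2, \dots) \setminus \{0\}$, which is precisely the asserted description. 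I expect no genuine difficulty beyond careful bookkeeping in the correspondence between the infinite padded representations and the finite ones; the only point requiring a moment's care is checking that the ``each $\lambda_j$ used once'' convention of Theorem~1.2 matches the ``distinct increasing indices $j_{h,n}$'' convention of Theorem~1.3, which is exactly what the support set $\{j : a^{*}_{i_{n,j}} \neq 0\}$ supplies.
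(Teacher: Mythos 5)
Your proposal is correct and follows exactly the paper's route: the paper obtains Theorem~1.3 as an immediate consequence of Theorem~1.2 (it states only that the theorem ``follows immediately''), and your argument --- adjoining $0$ to $A$, identifying the infinite zero-padded representations of Theorem~1.2 with the finite representations on the support $\{j : a^{*}_{i_{n,j}} \neq 0\}$, and disposing of $n = 0$ via the all-zero vector --- is precisely that deduction written out in detail. The bookkeeping you supply (the index shift $a^{*}_{i+1} = a_{i}$ and the mutual inverseness of the padding/restriction maps) is sound, so nothing is missing.
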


\section{Proof of Theorem 1}

First we prove the following lemma. 

\begin{lemma}
Let $u_{1}, v_{1}, u_{2}, v_{2}, \dots{} ,u_{k-1}, v_{k-1} \ge 2$, $u_{k} \ge 1$, $\lambda\ge 2$ 
be integers. 
\begin{itemize}
    \item [(i)] If $A = A_{\lambda}(u_{1}, v_{1}, u_{2}, v_{2}, \dots{} ,u_{k-1}, v_{k-1})$ and 
    
    $\Lambda  = \Lambda_{\lambda}(u_{1}, v_{1}, u_{2}, v_{2}, \dots{} ,u_{k-1}, v_{k-1}, u_{k}) = \{\lambda_{1}, \dots{} ,\lambda_{m}\}\subset \mathbb{Z}^{+}$, $\underline{\lambda} = (\lambda_{1}, \dots{} ,\lambda_{m})$ with $1\le \lambda_{1} < \dots{} < \lambda_{m}$, then for every $0 \le n < \lambda^{U_{k}V_{k-1}}$ one has $R_{A,\underline{\lambda}}(n) = 1$, and for every $n \ge  \lambda^{U_{k}V_{k-1}}$ one has $R_{A,\underline{\lambda}}(n) = 0$. 
    \item [(ii)] If $A = A_{\lambda}(u_{1}, v_{1}, u_{2}, v_{2}, \dots{} ,u_{k}, v_{k})$ and
    
    $\Lambda  = \Lambda_{\lambda}(u_{1}, v_{1}, u_{2}, v_{2}, \dots{} ,u_{k-1}, v_{k-1}, u_{k}) = \{\lambda_{1}, \dots{} ,\lambda_{m}\}\subset \mathbb{Z}^{+}$, $\underline{\lambda} = (\lambda_{1}, \dots{} ,\lambda_{m})$ with $1\le \lambda_{1} < \dots{} < \lambda_{m}$, then for every $0 \le n < \lambda^{U_{k}V_{k}}$ one has $R_{A,\underline{\lambda}}(n) = 1$, and for every $n \ge  \lambda^{U_{k}V_{k}}$ one has $R_{A,\underline{\lambda}}(n) = 0$. 
\end{itemize}
\end{lemma}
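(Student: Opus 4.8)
The plan is to reduce both statements to the existence and uniqueness of base-$\lambda$ expansions of a prescribed length, the bridge being a mixed-radix identity for the exponents that occur. I would treat (i) and (ii) simultaneously, writing $N = U_{k}V_{k-1}$ in case (i) and $N = U_{k}V_{k}$ in case (ii); the two arguments are identical apart from whether the terminal block of the interleaved radix sequence comes from $S$ or from $T$. The first step is to record the two elementary bijections on which everything rests. Since $\lambda \ge 2$, the map $s \mapsto \lambda^{s}$ is injective, so $\Lambda$ has exactly $|S| = U_{k}$ elements, and choosing $a_{\lambda^{s}} \in A$ for each $s \in S$ is the same as choosing, for each $s$, a digit vector $(\delta_{t}^{(s)})_{t \in T}$ with $0 \le \delta_{t}^{(s)} < \lambda$; indeed $(\delta_{t})_{t\in T} \mapsto \sum_{t \in T}\delta_{t}\lambda^{t}$ is a bijection onto $A$ because the exponents in $T$ are distinct and the digits lie in $[0,\lambda)$ (uniqueness of base-$\lambda$ expansion). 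Substituting these expansions into the form turns the representation equation into
\[
n = \sum_{s \in S}\lambda^{s}\sum_{t \in T}\delta_{t}^{(s)}\lambda^{t} = \sum_{(s,t)\in S\times T}\delta_{t}^{(s)}\lambda^{s+t},
\]
so that everything is controlled by the exponent sumset $S + T$.

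The combinatorial heart of the proof is the claim that $(s,t)\mapsto s+t$ is a bijection from $S\times T$ onto $\{0,1,\dots,N-1\}$. I would prove this by displaying $S+T$ as an interleaved mixed-radix number system: listing the place values in increasing order they read $U_{0}V_{0}=1,\ U_{1}V_{0},\ U_{1}V_{1},\ U_{2}V_{1},\dots$, where the place value $U_{h}V_{h}$ carries the coefficient $i_{h}\in[0,u_{h+1})$ coming from $S$, and the place value $U_{h+1}V_{h}$ carries the coefficient $j_{h}\in[0,v_{h+1})$ coming from $T$. Each place value is the preceding one times the corresponding radix ($u_{h+1}$ or $v_{h+1}$), the product of all the radices equals $N$, and every coefficient is bounded by its own radix; hence the standard existence-and-uniqueness theorem for mixed-radix representations yields that $(i_{0},j_{0},i_{1},j_{1},\dots)\mapsto s+t$ is a bijection onto $\{0,\dots,N-1\}$. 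In case (i) the last block is the $S$-block $i_{k-1}U_{k-1}V_{k-1}$ with radix $u_{k}$, giving $N = U_{k}V_{k-1}$; in case (ii) it is the $T$-block $j_{k-1}U_{k}V_{k-1}$ with radix $v_{k}$, giving $N = U_{k}V_{k}$.

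Granting the bijection, the conclusion is immediate. Writing $e = s+t$ and $d_{e} = \delta_{t}^{(s)}$, the displayed equation reads $n = \sum_{e=0}^{N-1}d_{e}\lambda^{e}$ with $0\le d_{e}<\lambda$, which is exactly a base-$\lambda$ expansion of $n$ of length $N$. For $0\le n<\lambda^{N}$ such an expansion exists and is unique, so there is exactly one vector $(d_{e})$, hence exactly one family $(\delta_{t}^{(s)})$, hence exactly one tuple $(a_{\lambda^{s}})_{s\in S}\in A^{U_{k}}$, giving $R_{A,\underline{\lambda}}(n)=1$. For $n\ge\lambda^{N}$ no such expansion exists, since the largest attainable value is $\sum_{e=0}^{N-1}(\lambda-1)\lambda^{e}=\lambda^{N}-1$, so $R_{A,\underline{\lambda}}(n)=0$. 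Equivalently, one may package the whole argument into the telescoping generating-function identity
\[
\prod_{e=0}^{N-1}\bigl(1 + x^{\lambda^{e}} + \dots + x^{(\lambda-1)\lambda^{e}}\bigr) = \sum_{n=0}^{\lambda^{N}-1}x^{n},
\]
after rewriting $\prod_{\mu\in\Lambda}\sum_{a\in A}x^{\mu a}$ as $\prod_{(s,t)\in S\times T}\sum_{\delta=0}^{\lambda-1}x^{\delta\lambda^{s+t}}$ via the same bijection.

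The main obstacle I anticipate is the bookkeeping in the mixed-radix step: getting the interleaving of the $i$- and $j$-coefficients exactly right, verifying that consecutive place values stand in the stated ratio so that the radix sequence genuinely telescopes to $N$, and handling the degenerate radix $u_{k}=1$ permitted in part (i) (where the final $S$-digit is forced to be $0$ and the range simply does not expand). Once the place-value list and its ratios are checked, the remainder is the routine theory of positional expansions, and (i) and (ii) differ only in the identity of the last radix.
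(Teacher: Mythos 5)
Your proposal is correct and takes essentially the same route as the paper: its proof likewise rests on the unique decomposition of every $m \in [0, U_{k}V_{k-1})$ (resp.\ $[0, U_{k}V_{k})$) as $m = s + t$ with $s \in S$, $t \in T$, followed by regrouping the base-$\lambda$ digits of $n$ as $n = \sum_{s\in S}\lambda^{s}\sum_{t\in T}\delta_{s+t}\lambda^{t}$. The only cosmetic differences are that you justify the sumset bijection explicitly via the interleaved mixed-radix system (which the paper asserts without detail) and you obtain $R_{A,\underline{\lambda}}(n) = 0$ for $n \ge \lambda^{N}$ from the maximal attainable value $\lambda^{N}-1$, where the paper instead counts the $|A|^{|\Lambda|} = \lambda^{N}$ tuples.
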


\begin{proof}
We prove only part (i) because the proof of (ii) is very similar; hence we omit the details. If
\[
S(u_{1}, v_{1}, u_{2}, v_{2}, \dots{} ,u_{k-1}, v_{k-1}, u_{k}) = 
\]
\[
= \{i_{0} + i_{1}u_{1}v_{1} + i_{2}u_{2}u_{2}u_{1}v_{1} + \dots{} + i_{k-1}u_{k-1}v_{k-1}v_{k-2}u_{k-2}\cdots{} v_{1}u_{1}:
\]
\[
0\le i_{h} < u_{h+1}, h = 0,1, \dots{} ,k-1\}, 
\]
\[
T(u_{1}, v_{1}, u_{2}, v_{2}, \dots{} ,u_{k-1}, v_{k-1}) = 
\]
\[
= \{j_{0}u_{1} + j_{1}u_{2}v_{1}u_{1} + j_{2}u_{3}v_{2}u_{2}v_{1}u_{1} + \dots{} + j_{k-2}u_{k-1}v_{k-2}u_{k-2}\cdots{} v_{1}u_{1}: 
\]
\[
0\le j_{h} < v_{h+1}, h = 0,1, \dots{} ,k-2\},
\]
then every $0 \le m < U_{k}V_{k-1}$ can be uniquely written in the form $m = s + t$, where $s\in S$ and $t\in T$. Since every $0 \le n < \lambda^{U_{k}V_{k-1}}$ can be uniquely written in the form 
\[
n = \sum_{i=0}^{U_{k}V_{k-1}}\delta_{i}\lambda^{i},
\]
where $0 \le \delta_{i} < \lambda$ are integers, and so $n$ also can be written uniquely in the form
\[
n = \sum_{s\in S}\sum_{t\in T}\delta_{s+t}\lambda^{s+t} = \sum_{s\in S}\lambda^{s}\sum_{t\in T}\delta_{s+t}\lambda^{t}, 
\]
where $\lambda^{s}\in \Lambda$ and $\sum_{t\in T}\delta_{s+t}\lambda^{t}\in A$. Thus every $0 \le n < \lambda^{U_{k}V_{k-1}}$ can be written in the form 
\[
n = \lambda_{1}a^{(1)} + \dots{} + \lambda_{m}a^{(m)},
\]
where $a^{(1)}, \dots{} ,a^{(m)}\in A$. The number of these numbers is $|A|^{|\Lambda|} = \lambda^{U_{k}V_{k-1}}$, and so if $n \ge \lambda^{U_{k}V_{k-1}}$, then $R_{A,\underline{\lambda}}(n) = 0$.  
\end{proof}

Furthermore, we need the following lemma.

\begin{lemma}
Let $S\subseteq \mathbb{N}$ with $0\in S$ and $\underline{\mu} = (\mu_{1}, \dots{} ,\mu_{r})\subset \mathbb{Z}^{+}$ with $1\le \mu_{1} < \dots{} < \mu_{r}$. If there exist index sets $I,J\subseteq \{1, \dots{} ,r\}$ with $I \neq J$ and $I = \{i_{1}, \dots{} ,i_{k}\}$, $i_{1} < \dots{} < i_{k}$ and $J = \{j_{1}, \dots{} ,j_{l}\}$, $1 \le j_{1} < \dots{} < j_{l}$ 
and there exist non zero integers $s^{(i_{1})}, \dots{} ,s^{(i_{k})}, s^{(j_{1})}, \dots{} ,s^{(j_{l})}\in S$ 
such that 
\[
\mu_{i_{1}}s^{(i_{1})} + \dots{} + \mu_{i_{k}}s^{(i_{k})} = \mu_{j_{1}}s^{(j_{1})} + \dots{} + \mu_{j_{l}}s^{(j_{l})},
\]
then there exist a nonnegative integer $N$ such that $R_{S,\underline{\mu}}(N) \ge 2$.
\end{lemma}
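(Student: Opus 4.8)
The plan is to turn the two given ``partial'' representations into two genuinely distinct full $r$-term representations of a single integer $N$, exploiting that $0\in S$ so that the unused coordinates may be padded with zeros.

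First I would set $N$ equal to the common value of the two sides of the displayed identity, i.e.
\[
N = \mu_{i_{1}}s^{(i_{1})} + \dots + \mu_{i_{k}}s^{(i_{k})} = \mu_{j_{1}}s^{(j_{1})} + \dots + \mu_{j_{l}}s^{(j_{l})}.
\]
Since every $s^{(p)}\in S\subseteq\mathbb{N}$ and every $\mu_{p}\ge 1$, we have $N\ge 0$, so $N$ is a legitimate target. Next I would build two $r$-tuples $\underline{a}=(a_{1},\dots,a_{r})$ and $\underline{b}=(b_{1},\dots,b_{r})$ in $S^{r}$ by declaring $a_{p}=s^{(p)}$ for $p\in I$ and $a_{p}=0$ otherwise, and $b_{p}=s^{(p)}$ for $p\in J$ and $b_{p}=0$ otherwise. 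The assumption $0\in S$ guarantees that both tuples actually lie in $S^{r}$, and by construction
\[
\sum_{p=1}^{r}\mu_{p}a_{p}=\sum_{p\in I}\mu_{p}s^{(p)}=N=\sum_{p\in J}\mu_{p}s^{(p)}=\sum_{p=1}^{r}\mu_{p}b_{p},
\]
so both $\underline{a}$ and $\underline{b}$ are counted by $R_{S,\underline{\mu}}(N)$.

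It then remains to check that $\underline{a}\ne\underline{b}$. Because $I\ne J$, the symmetric difference $I\triangle J$ is nonempty, so we may pick an index $p_{0}$ lying in exactly one of $I,J$ --- say $p_{0}\in I\setminus J$ after possibly interchanging the two sets (the argument being symmetric). At this coordinate one tuple carries the value $a_{p_{0}}=s^{(p_{0})}$ while the other carries $b_{p_{0}}=0$; the hypothesis that the $s^{(p)}$ are nonzero forces $a_{p_{0}}\ne b_{p_{0}}$, and hence $\underline{a}\ne\underline{b}$. Two distinct tuples representing the same $N$ yield $R_{S,\underline{\mu}}(N)\ge 2$, as required.

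The computation is entirely routine; the only place where care is genuinely needed is this final distinctness step, and it is exactly there that both standing hypotheses enter. The requirement $0\in S$ is what lets us pad the shorter index set so that every representation uses the full form $\mu_{1}s_{1}+\dots+\mu_{r}s_{r}$, while the nonvanishing of the elements $s^{(p)}$ is precisely what prevents the two padded tuples from silently coinciding on $I\triangle J$. Without the nonzero condition the conclusion would fail, so I expect the emphasis of the write-up to be on making clear that distinctness is inherited from $I\ne J$ together with nonvanishing, rather than on any arithmetic manipulation.
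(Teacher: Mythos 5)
Your proof is correct and follows essentially the same route as the paper: the paper likewise sets $N$ to the common value and pads the complements $I^{c}=[1,r]\setminus I$, $J^{c}=[1,r]\setminus J$ with $\mu_{i}\cdot 0$ to obtain two representations. The only difference is that you explicitly verify the distinctness of the two padded tuples (via $I\neq J$ and the nonvanishing of the $s^{(p)}$), a step the paper leaves implicit.
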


\begin{proof}
Let $I^{c} = [1,r]\setminus I$ and $J^{c} = [1,r]\setminus J$. Since
\[
N = \sum_{i\in I}\mu_{i}s^{(i)} + \sum_{i\in I^{c}}\mu_{i}\cdot 0 = \sum_{j\in J}\mu_{j}s^{(j)} + \sum_{j\in J^{c}}\mu_{i}\cdot 0,
\]
then $R_{S,\underline{\mu}}(N) \ge 2$.
\end{proof}

Now we prove Theorem 1.1. In the first step, we prove the sufficient part, so for $\Lambda = \{\lambda_{1}, \dots{} ,\lambda_{m}\}\subset \mathbb{Z}^{+}$ with $1\le \lambda_{1} < \dots{} < \lambda_{m}$ and 
$\underline{\lambda} = (\lambda_{1}, \dots{} ,\lambda_{m})$, let us suppose that there exists an integer $\lambda \ge 2$ and $u_{1}, v_{1}, u_{2}, v_{2}, \dots{} ,u_{k-1}, v_{k-1}, u_{k} \ge 2$ integers such that 
\[
\Lambda  = \Lambda_{\lambda}(u_{1}, v_{1}, u_{2}, v_{2}, \dots{} ,u_{k-1}, v_{k-1}, u_{k})
\]
and
\[
A = A_{\lambda}(u_{1}, v_{1}, u_{2}, v_{2}, \dots{} ,u_{k-1}, v_{k-1}, u_{k}, \infty).
\] Obviously,
\[
A_{\lambda}(u_{1}, v_{1}, u_{2}, v_{2}, \dots{} ,u_{k}, 1) \subseteq A_{\lambda}(u_{1}, v_{1}, u_{2}, v_{2}, \dots{} ,u_{k}, 2) \subseteq A_{\lambda}(u_{1}, v_{1}, u_{2}, v_{2}, \dots{} ,u_{k}, 3) \subseteq \dots{}
\]
and
\[
A_{\lambda}(u_{1}, v_{1}, u_{2}, v_{2}, \dots{} ,u_{k}, \infty) = \bigcup_{v=1}^{\infty}A_{\lambda}(u_{1}, v_{1}, u_{2}, v_{2}, \dots{} ,u_{k}, v).
\]
By (ii) of Lemma 2.1, for every $v \ge 1$ we have 
\[
R_{A_{\lambda}(u_{1}, v_{1}, u_{2}, v_{2}, \dots{} ,u_{k}, v),\underline{\lambda}}(n) = 1
\]
for every integer $0 \le n < \lambda^{vU_{k}V_{k-1}}$ and so 
\[
R_{A_{\lambda}(u_{1}, v_{1}, u_{2}, v_{2}, \dots{} ,u_{k}, \infty),\underline{\lambda}}(n) = 1
\]
for every $n\in \mathbb{N}$.

Now we prove the necessity part. Let $A = \{a_{1}, a_{2}, \dots{} \}$, $0\le a_{1} < a_{2} \dots{}$ and  
$\underline{\lambda} = (\lambda_{1}, \dots{} ,\lambda_{m})$ with $1\le \lambda_{1} < \dots{} < \lambda_{m}$.  Suppose that $R_{A,\underline{\lambda}}(n) = 1$ for every nonnegative integer $n$. 
Since  $R_{A,\underline{\lambda}}(0) = 1$, we have 
\[
0 = \lambda_{1}a^{(1)} + \dots{} + \lambda_{m}a^{(m)},
\]
with $a^{(1)}, \dots{} ,a^{(m)}\in A$, which implies that $a_{1} = 0$. Since $R_{A,\underline{\lambda}}(1) = 1$, there exists a representation
\[
1 = \lambda_{1}a^{(1)} + \dots{} + \lambda_{m}a^{(m)},
\]
with $a^{(1)}, \dots{} ,a^{(m)}\in A$, which implies that $a_{2} = 1$ and $\lambda_{1} = 1$. Now assume that $a_{i} = i - 1$ for $1 \le i \le \lambda$, but $a_{\lambda+1} \neq \lambda$. We show that $\lambda_{2} = \lambda$. If $\lambda_{2} < \lambda$, then $\lambda_{2} = \lambda_{1}a_{\lambda_{2}+1} = \lambda_{2}a_{2}$. In view of Lemma 2.2, this is a contradiction. If $\lambda_{2} > \lambda$, then in
the following representation
\[
\lambda = \lambda_{1}a^{(1)} + \dots{} + \lambda_{m}a^{(m)},
\]
with $a^{(1)}, \dots{} ,a^{(m)}\in A$, we have $a^{(i)} = 0$ for every $2 \le i \le m$ and so $\lambda = \lambda_{1}a^{(1)} = a^{(1)}$, which implies that $\lambda\in A$, a contradiction. 

Assume that there exists an integer $u_{1} \ge 2$ such that  $\lambda_{1} = 1$, $\lambda_{2} = \lambda$, 
$\lambda_{3} = \lambda^{2}, \dots{} ,\lambda_{u_{1}} = \lambda^{u_{1}-1}$, but $\lambda_{u_{1}+1}\neq \lambda^{u_{1}}$. Now we prove that $\lambda_{u_{1}+1} > \lambda^{u_{1}}$ and $a_{\lambda+1} = \lambda^{u_{1}}$. If $\lambda_{u_{1}+1} < \lambda^{u_{1}}$, then $\lambda_{u_{1}+1} = \lambda_{u_{1}+1}a_{2}$,
but the representation 
\[
\lambda_{u_{1}+1} = \delta_{0} + \delta_{1}\lambda + \dots{} + \delta_{u_{1}-1}\lambda^{u_{1}-1},
\]
with $0 \le \delta_{i} < \lambda$ gives the representation
\[
\lambda_{u_{1}+1} = \lambda_{1}a_{\delta_{0}+1} + \lambda_{2}a_{\delta_{1}+1} + \dots{} + \lambda_{u_{1}}a_{\delta_{u_{1}-1}+1}.
\]
By Lemma 2.2, this is a contradiction. Furthermore, if $a_{\lambda+1} < \lambda^{u_{1}}$, then by using $a_{\lambda+1} = \lambda_{1}a_{\lambda+1}$, the representation
\[
a_{\lambda+1} = \delta_{0} + \delta_{1}\lambda + \dots{} + \delta_{u_{1}-1}\lambda^{u_{1}-1},
\]
with $0 \le \delta_{i} < \lambda$ gives the representation
\[
a_{\lambda+1} = \lambda_{1}a_{\delta_{0}+1} + \lambda_{2}a_{\delta_{1}+1} + \dots{} + \lambda_{u_{1}}a_{\delta_{u_{1}-1}+1}.
\]
In view of Lemma 2.2, this is a contradiction. If $a_{\lambda+1} > \lambda^{u_{1}}$, then
in the following representation
\[
\lambda^{u_{1}} = \lambda_{1}a^{(1)} + \dots{} + \lambda_{m}a^{(m)},
\]
with $a^{(i)}\in A$, by $\lambda_{u_{1}+1} > \lambda^{u_{1}}$, we have $a^{(i)} = 0$ for every $i > u_{1}$. Moreover, for every $1\le i \le u_{1}$, then $a^{(i)}\le \lambda^{u_{1}}$, and so $0 \le a^{(i)}< \lambda$. 
\[
\lambda^{u_{1}} = \lambda_{1}a^{(1)} + \dots{} + \lambda_{u_{1}}a^{u_{1}} \le 1\cdot(\lambda-1)+\lambda^{2}(\lambda-1)+\lambda^{2}(\lambda-1)+\dots{} +\lambda^{u_{1}-1}(\lambda-1) 
\]
\[
= \lambda^{u_{1}} - 1,
\]
which is a contradiction and so  $a_{\lambda+1} = \lambda^{u_{1}}$.

Next, we show that for some $v\in \mathbb{Z}^{+}$, if $\Lambda \cap [0,\lambda^{vu_{1}} - 1] = \Lambda_{\lambda}(u_{1})$, $A \cap [0,\lambda^{vu_{1}} - 1] = A(u_{1},v)$ with $\lambda^{vu_{1}}\in A$, then
$\Lambda \cap [0,\lambda^{(v+1)u_{1}} - 1] = \Lambda_{\lambda}(u_{1})$, $A \cap [0,\lambda^{(v+1)u_{1}} - 1] = A(u_{1},v+1)$ with $\lambda^{(v+1)u_{1}}\in A\cup \Lambda$, but $\lambda^{(v+1)u_{1}}\notin A\cap \Lambda$.

Assume that $\Lambda \cap [0,\lambda^{(v+1)u_{1}} - 1] \neq \Lambda_{\lambda}(u_{1})$ or $A \cap [0,\lambda^{(v+1)u_{1}} - 1] \neq A_{\lambda}(u_{1},v+1)$. Then there exists an integer $\lambda^{vu_{1}} < n < \lambda^{(v+1)u_{1}}$ such that $\Lambda \cap [0,n-1] = \Lambda_{\lambda}(u_{1})$,  $A \cap [0,n - 1] = A_{\lambda}(u_{1},v+1)\cap [0,n-1]$, but $\Lambda \cap [0,n] \neq \Lambda_{\lambda}(u_{1})$ or $A \cap [0,n] \neq A_{\lambda}(u_{1},v+1)\cap [0,n]$. These statements can only hold in the following four cases.

\textbf{Case 1.} $n\notin A_{\lambda}(u_{1},v+1)$, but $n\in A$. Then
in the following representation
\[
n = \lambda_{1}a^{(1)} + \dots{} + \lambda_{u_{1}}a^{(u_{1})},
\]
with $a^{(i)}\in  A_{\lambda}(u_{1},v+1)$, we have $a^{(i)} < n$ and so $a^{(i)}\in A$. Since $n\in A$, we have $n = \lambda_{1}\cdot n$, In view of Lemma 2.2 with $S = A$ and $\underline{\mu} = \underline{\lambda}$, this is a contradiction.

\textbf{Case 2.} $n\notin A_{\lambda}(u_{1},v+1)$ but $n\in \Lambda$. Then
in the following representation
\[
n = \lambda_{1}a^{(1)} + \dots{} + \lambda_{u_{1}}a^{(u_{1})},
\]
with $a^{(i)}\in  A_{\lambda}(u_{1},v+1)$, we have $a^{(i)} < n$ and so $a^{(i)}\in A$, for every $1 \le i \le u_{1}$. On the other hand, if $n = \lambda_{u_{1}+1}$, then $n = \lambda_{u_{1}+1}\cdot a_{2}$. In view of Lemma 2.2 with $S = A$ and $\underline{\mu} = \underline{\lambda}$, this is a contradiction.

\textbf{Case 3.} $n\in A_{\lambda}(u_{1},v+1)$ but $n\notin \Lambda \cup A$. Then
in the following representation
\[
n = \lambda_{1}a^{(1)} + \dots{} + \lambda_{u_{1}}a^{(u_{1})},
\]
with $a^{(i)}\in  A$,  we have $a^{(i)} < n$ and so $a^{(i)}\in A_{\lambda}(u_{1},v+1)$, for every $1 \le i \le u_{1}$. Moreover, $n = \lambda_{1}\cdot n$, $n\in A_{\lambda}(u_{1},v+1)$. In view of Lemma 2.1 and Lemma 2.2 with $S = A_{\lambda}(u_{1},v+1)$ and $\underline{\mu} = \Lambda(u_{1})$, this is a contradiction.

\textbf{Case 4.} $n\in A_{\lambda}(u_{1},v+1)$ and $n\in \Lambda$. Then $n = j_{0}\lambda^{vu_{1}}$, with $1 < j_{0} < \lambda$ and $\lambda_{u_{1}+1} = n$. Since
\[
0 \le \lambda^{vu_{1}+1} - \left \lfloor \frac{\lambda^{vu_{1}+1}}{n} \right  \rfloor  \cdot n < n, \hspace*{15mm}  0 < \left \lfloor \frac{\lambda^{vu_{1}+1}}{n} \right  \rfloor < \lambda,
\]
thus we have 
\[
\lambda^{vu_{1}+1} - \left \lfloor \frac{\lambda^{vu_{1}+1}}{n} \right  \rfloor \cdot n = \lambda_{1}a^{(1)} + \dots{} + \lambda_{u_{1}}a^{(u_{1})},
\]
with $a^{(1)}, \dots{} ,a^{(u_{1})}\in  A$, and so
\[
\lambda^{vu_{1}+1} = \lambda_{1}a^{(1)} + \dots{} + \lambda_{u_{1}}a^{(u_{1})} + \lambda_{u_{1}+1}\cdot a_{\left \lfloor \frac{\lambda^{vu_{1}+1}}{n} \right  \rfloor + 1}.
\]
On the other hand, $\lambda^{vu_{1}}\in A$, hence $\lambda^{vu_{1}+1} = \lambda_{2}\cdot \lambda^{vu_{1}}$.  
In view of Lemma 2.2 with $S = A$ and $\underline{\mu} = \underline{\lambda}$, this is a contradiction.

It follows that  $A \cap [0,\lambda^{(v+1)u_{1}} - 1]= A_{\lambda}(u_{1},v+1)$ and $\Lambda \cap [0,\lambda^{vu_{1}} - 1] = \Lambda(u_{1})$. Now we prove that $\lambda^{(v+1)u_{1}}\in \Lambda \cup A$, but
$\lambda^{(v+1)u_{1}}\notin \Lambda \cap A$. By (ii) of Lemma 2.1, if $0 \le n < \lambda^{(v+1)u_{1}}$, then
$R_{A_{\lambda}(u_{1},v+1), \Lambda(u_{1})}(n) = 1$ and if  $n \ge \lambda^{(v+1)u_{1}}$, then $R_{A_{\lambda}(u_{1},v+1), \Lambda(u_{1})}(n) = 0$. Let $A_{\lambda}(u_{1},v+1) = \{a_{1}, \dots{} ,a_{f}\}$, $a_{1} < a_{2} \dots{} < a_{f}$. Then the smallest element of the following set
\[
\{\lambda_{1}a^{(1)} + \dots{} + \lambda_{m}a^{(m)}: a^{(i)}\in  A\} \setminus \{\lambda_{1}a^{(1)} + \dots{} + \lambda_{u_{1}}a^{(u_{1})}: a^{(i)}\in A_{\lambda}(u_{1},v+1)\}
\]
is $\lambda^{(v+1)u_{1}}$ and also $\lambda_{1}a_{f+1} = a_{f+1}$ or $\lambda_{u_{1}+1}\cdot a_{2} = \lambda_{u_{1}+1}$, thus $\lambda^{(v+1)u_{1}}\in \Lambda \cup A$. Suppose that $\lambda^{(v+1)u_{1}}\in \Lambda \cap A$. Then we have  $\lambda_{u_{1}+1} = \lambda^{(v+1)u_{1}}$, and so $\lambda^{(v+1)u_{1}} = \lambda_{1}\cdot \lambda^{(v+1)u_{1}} = \lambda_{u_{1}+1} \cdot a_{2}$.
In view of Lemma 2.2 with $S = A$ and $\underline{\mu} = \underline{\lambda}$, this is a contradiction.

If $\lambda^{vu_{1}}\in A$ for every $v\in \mathbb{Z}^{+}$, then $A = A_{\lambda}(u_{1},\infty)$ and 
$\Lambda = \Lambda(u_{1})$, and the proof is completed. Otherwise, there exists an integer $v_{1} \ge 2$ such that for every $1 \le v < v_{1}$ we have $\lambda^{vu_{1}}\in A$ and $\lambda^{v_{1}u_{1}}\in \Lambda$. 
Then we have $A \cap [0,\lambda^{v_{1}u_{1}} - 1] = A_{\lambda}(u_{1},v_{1})$, $\Lambda \cap [0,\lambda^{v_{1}u_{1}} - 1] = \Lambda(u_{1})$, $\lambda^{v_{1}u_{1}}\in \Lambda$ and $\lambda^{v_{1}u_{1}}\notin A$.

To prove Theorem 1.1, we need the following claims.

\textbf{Claim 1.} Assume that the integers $u_{1}, v_{1}, u_{2}, v_{2}, \dots{} ,u_{h}, v_{h} \ge 2$ and $u \ge 1$ satisfy 
\[
\Lambda \cap [0,\lambda^{uU_{h}V_{h}} - 1] = \Lambda_{\lambda}(u_{1}, v_{1}, u_{2}, v_{2}, \dots{} ,u_{h}, v_{h}, u),
\]
\[
A \cap [0,\lambda^{uU_{h}V_{h}} - 1] = A_{\lambda}(u_{1}, v_{1}, u_{2}, v_{2}, \dots{} ,u_{h}, v_{h}) 
\]
and $\lambda^{uU_{h}V_{h}} \in \Lambda$, but $\lambda^{uU_{h}V_{h}} \notin A$.
Then we have 
\[
\Lambda \cap [0,\lambda^{(u+1)U_{h}V_{h}} - 1] = \Lambda_{\lambda}(u_{1}, v_{1}, u_{2}, v_{2}, \dots{} ,u_{h}, v_{h}, u+1), 
\]
\[
A \cap [0,\lambda^{(u+1)U_{h}V_{h}} - 1] = A_{\lambda}(u_{1}, v_{1}, u_{2}, v_{2}, \dots{} ,u_{h}, v_{h})
\]
and $\lambda^{(u+1)U_{h}V_{h}} \in A \cup \Lambda$, but $\lambda^{(u+1)U_{h}V_{h}} \notin A\cap \Lambda$.

\textbf{Claim 2.} Assume that the integers $u_{1}, v_{1}, u_{2}, v_{2}, \dots{} ,u_{h}, v_{h}, u_{h+1} \ge 2$ and $v \ge 1$ satisfy 
\[
\Lambda \cap [0,\lambda^{U_{h+1}V_{h}} - 1] = \Lambda_{\lambda}(u_{1}, v_{1}, u_{2}, v_{2}, \dots{} ,u_{h}, v_{h}, u_{h+1}),
\]
\[
A \cap [0,\lambda^{U_{h+1}V_{h}} - 1] = A_{\lambda}(u_{1}, v_{1}, u_{2}, v_{2}, \dots{} ,u_{h}, v_{h}, u_{h+1}, v)
\]
and $\lambda^{vU_{h+1}V_{h}} \in A$, but $\lambda^{vU_{h+1}V_{h}} \notin \Lambda$.
Then we have 
\[
\Lambda \cap [0,\lambda^{(v+1)U_{h+1}V_{h}} - 1] = \Lambda_{\lambda}(u_{1}, v_{1}, u_{2}, v_{2}, \dots{} ,u_{h}, v_{h}, u_{h+1}), 
\]
\[
A \cap [0,\lambda^{(v+1)U_{h+1}V_{h}} - 1] = A_{\lambda}(u_{1}, v_{1}, u_{2}, v_{2}, \dots{} ,u_{h}, v_{h}, u_{h+1}, v+1) 
\]
and $\lambda^{(v+1)U_{h+1}V_{h}} \in A \cup \Lambda$, but $\lambda^{(v+1)U_{h+1}V_{h}} \notin A\cap \Lambda$.

In the next step, we show how Theorem 1.1 follows from Claim 1 and Claim 2. We can assume that there exist integers $u_{1}, v_{1} \ge 2$ such that $A \cap [0,\lambda^{1\cdot U_{1}V_{1}} - 1] = A_{\lambda}(u_{1}, v_{1})$, $\Lambda \cap [0,\lambda^{1\cdot U_{1}V_{1}} - 1] = \Lambda_{\lambda}(u_{1})$ and $\lambda^{U_{1}V_{1}} \in \Lambda$, but $\lambda^{U_{1}V_{1}} \notin A$. Then by Claim 1, there exists an integer $u_{2}\ge 2$ such that for every $1\le u < u_{2}$ we have $\lambda^{uU_{1}V_{1}} \in \Lambda$, but $\lambda^{u_{2}U_{1}V_{1}} \in A$. Then by Claim 2, if $\Lambda \cap [0,\lambda^{vU_{2}V_{1}} - 1] = \Lambda_{\lambda}(u_{1},v_{1},u_{2})$, $A \cap [0,\lambda^{vU_{2}V_{1}} - 1] = A_{\lambda}(u_{1}, v_{1}, u_{2}, v)$ and $\lambda^{vU_{2}V_{1}} \notin \Lambda$, but $\lambda^{vU_{2}V_{1}} \in A$ holds for every $v\in \mathbb{Z}^{+}$, then $\Lambda = \Lambda_{\lambda}(u_{1},v_{1},u_{2})$ and $A = A_{\lambda}(u_{1}, v_{1}, u_{2}, \infty)$ and Theorem 1.1 follows. Otherwise,
there exists an integer $v_{2} \ge 2$ such that $\Lambda \cap [0,\lambda^{U_{2}V_{2}} - 1] = \Lambda_{\lambda}(u_{1},v_{1}, u_{2})$, $A \cap [0,\lambda^{U_{2}V_{2}} - 1] = A_{\lambda}(u_{1}, v_{1}, u_{2})$ and $\lambda^{U_{2}V_{2}} \in \Lambda$, but $\lambda^{U_{2}V_{2}} \notin A$. Continuing this process one can get a $k \in \mathbb{Z}^{+}$ with $\Lambda \cap [0,\lambda^{U_{k-1}V_{k-1}}] = \Lambda_{\lambda}(u_{1},v_{1}, \dots{} ,u_{k-2},v_{k-2}, u_{k-1})$, $A \cap [0,\lambda^{U_{k-1}V_{k-1}}] = A_{\lambda}(u_{1}, v_{1}, \dots{} ,u_{k-2},v_{k-2},u_{k-1},v_{k-1})$ and $\lambda^{U_{k-1}V_{k-1}} \in \Lambda$, but $\lambda^{U_{k-1}V_{k-1}} \notin A$. Moreover, by Claim 1, there exists an integer $u_{k}\ge 2$ such that $\Lambda \cap [0,\lambda^{U_{k}V_{k-1}} - 1] = \Lambda_{\lambda}(u_{1},v_{1}, \dots{} ,u_{k-2},v_{k-2}, u_{k-1}, v_{k-1}, u_{k})$, $A \cap [0,\lambda^{u_{k}U_{k-1}V_{k-1}} - 1] = A_{\lambda}(u_{1}, v_{1}, \dots{} ,u_{k-1},v_{k-1})$ and $\lambda^{U_{k}V_{k-1}} \notin \Lambda$, but $\lambda^{U_{k}V_{k-1}} \in A$ and 
\[
\Lambda \cap [0,\lambda^{vU_{k}V_{k-1}} - 1] = \Lambda_{\lambda}(u_{1},v_{1}, \dots{} ,u_{k-2},v_{k-2}, u_{k-1}, v_{k-1}, u_{k}), 
\]
$A \cap [0,\lambda^{vU_{k}V_{k-1}} - 1] = A_{\lambda}(u_{1}, v_{1}, \dots{} ,u_{k-1},v_{k-1},u_{k}, v)$ for every $v \in \mathbb{Z}^{+}$. Then we have
\[
\Lambda = \Lambda_{\lambda}(u_{1},v_{1}, \dots{} ,u_{k-2},v_{k-2}, u_{k-1}, v_{k-1}, u_{k}) 
\]
and  
\[
A = A_{\lambda}(u_{1}, v_{1}, \dots{} ,u_{k-1},v_{k-1}, u_{k}, \infty)
\]
and Theorem 1.1 follows.

\textbf{Proof of Claim 1.} We prove by contradiction. Suppose that there exists an integer $\lambda^{uU_{h}V_{h}} < n < \lambda^{(u+1)U_{h}V_{h}}$ such that 
\[
A \cap [0, n - 1] = A_{\lambda}(u_{1}, v_{1}, u_{2}, v_{2}, \dots{} ,u_{h}, v_{h})\cap [0, n - 1] 
\]
and
\[
\Lambda \cap [0, n - 1] = \Lambda_{\lambda}(u_{1}, v_{1}, u_{2}, v_{2}, \dots{} ,u_{h}, v_{h}, u+1)\cap [0, n - 1] 
\]
but 
\[
A \cap [0, n] \neq A_{\lambda}(u_{1}, v_{1}, u_{2}, v_{2}, \dots{} ,u_{h}, v_{h})\cap [0, n] 
\]
or
\[
\Lambda \cap [0, n] \neq \Lambda_{\lambda}(u_{1}, v_{1}, u_{2}, v_{2}, \dots{} ,u_{h}, v_{h}, u+1)\cap [0, n]. 
\]

Now let $\Lambda_{\lambda}(u_{1}, v_{1}, u_{2}, v_{2}, \dots{} ,u_{h}, v_{h}, u+1)\cap [0, n] = \{\lambda_{1}^{'}, \dots{} ,\lambda_{w}^{'}\}$, $\lambda_{1}^{'} < \dots{} < \lambda_{w}^{'}$. We have the following four cases.

\textbf{Case 1.} $n\notin \Lambda_{\lambda}(u_{1}, v_{1}, u_{2}, v_{2}, \dots{} ,u_{h}, v_{h}, u+1)$, but $n\in \Lambda$. Then $\lambda_{1}^{'}, \dots{} ,\lambda_{w}^{'}\in \Lambda$ and 
\[
n = \lambda_{1}^{'}a^{(1)} + \dots{} + \lambda_{w}^{'}a^{(w)},
\]
with $a^{(i)}\in  A_{\lambda}(u_{1}, v_{1}, u_{2}, v_{2}, \dots{} ,u_{h}, v_{h})$ and so $a^{(i)} < n$, $a^{(i)}\in  A$, for every $1\le i \le w$. On the other hand, $n = n\cdot a_{2}$, $n\in \Lambda$. In view of Lemma 2.2 with $S = A$ and $\underline{\mu} = \underline{\lambda}$, this is a contradiction. 

\textbf{Case 2.} $n\notin \Lambda_{\lambda}(u_{1}, v_{1}, u_{2}, v_{2}, \dots{} ,u_{h}, v_{h}, u+1)$, but $n\in A$. Then $\lambda_{1}^{'}, \dots{} ,\lambda_{w}^{'}\in \Lambda$ and 
\[
n = \lambda_{1}^{'}a^{(1)} + \dots{} + \lambda_{w}^{'}a^{(w)},
\]
and $a^{(i)}\in  A_{\lambda}(u_{1}, v_{1}, u_{2}, v_{2}, \dots{} ,u_{h}, v_{h})$ and so $a^{(i)} < n$ , $a^{(i)}\in  A$ for every $1\le i \le w$. On the other hand, $n = \lambda_{1}\cdot n$, $n\in A$. 
In view of Lemma 2.2 with $S = A$ and $\underline{\mu} = \underline{\lambda}$, this is a contradiction.

\textbf{Case 3.} $n\in \Lambda_{\lambda}(u_{1}, v_{1}, u_{2}, v_{2}, \dots{} ,u_{h}, v_{h}, u+1)$, but $n\notin A\cup \Lambda$. Then $\Lambda\cap [0, n] = \{\lambda_{1}^{'}, \dots{} ,\lambda_{w-1}^{'}\}$, and so 
\[
n = \lambda_{1}^{'}a^{(1)} + \dots{} + \lambda_{w-1}^{'}a^{(w-1)},
\]
where $a^{(i)}\in  A$,  $a^{(i)} < n$ and so $a^{(i)}\in  A_{\lambda}(u_{1}, v_{1}, u_{2}, v_{2}, \dots{} ,u_{h}, v_{h})$ for every $1\le i \le w - 1$. Furthermore, $n = n\cdot a_{2}$, where $n\in \Lambda$. In view of Lemma 2.1 and Lemma 2.2 with $S = A_{\lambda}(u_{1}, v_{1}, u_{2}, v_{2}, \dots{} ,u_{h}, v_{h})$ and
$\underline{\mu} = \Lambda_{\lambda}(u_{1}, v_{1}, u_{2}, v_{2}, \dots{} ,u_{h}, v_{h}, u+1)$, this is a  contradiction.

\textbf{Case 4.} 
 $n\in \Lambda_{\lambda}(u_{1}, v_{1}, u_{2}, v_{2}, \dots{} ,u_{h}, v_{h}, u+1)$, but $n\in A$. Let
 \[
 n = \lambda^{i_{p}U_{p}V_{p} + i_{p+1}U_{p+1}V_{p+1} + \dots{} + i_{h-1}U_{h-1}V_{h-1} + uU_{h}V_{h}},
 \]
where $p < h$ and $0 < i_{p} < u_{p+1}$. Then by the definition of $\Lambda_{\lambda}(u_{1}, v_{1}, u_{2}, v_{2}, \dots{} ,u_{h}, v_{h}, u+1)$, it follows that $\lambda^{(u_{p+1}-i_{p})U_{p}V_{p}}\in \Lambda_{\lambda}(u_{1}, v_{1}, u_{2}, v_{2}, \dots{} ,u_{h}, v_{h}, u+1)$, but since $\lambda^{(u_{p+1}-i_{p})U_{p}V_{p}} < n$, we have $\lambda^{(u_{p+1}-i_{p})U_{p}V_{p}}\in \Lambda$.
Moreover, by the definition of $A_{\lambda}(u_{1}, v_{1}, u_{2}, v_{2}, \dots{} ,u_{h}, v_{h})$ it follows that $\lambda^{u_{p+1}U_{p}V_{p}} \in A_{\lambda}(u_{1}, v_{1}, u_{2}, v_{2}, \dots{} ,u_{h}, v_{h})$, but since $\lambda^{u_{p+1}U_{p}V_{p}} < n$, we have $\lambda^{u_{p+1}U_{p}V_{p}}\in A$. By the definition of $\Lambda_{\lambda}(u_{1}, v_{1}, u_{2}, v_{2}, \dots{} ,u_{h}, v_{h}, u+1)$,
\[
\lambda^{i_{p+1}U_{p+1}V_{p+1} + \dots{} + i_{h-1}U_{h-1}V_{h-1} + uU_{h}V_{h}} \in \Lambda_{\lambda}(u_{1}, v_{1}, u_{2}, v_{2}, \dots{} ,u_{h}, v_{h}, u+1),
\]
but since $\lambda^{i_{p+1}U_{p+1}V_{p+1} + \dots{} + i_{h-1}U_{h-1}V_{h-1} + uU_{h}V_{h}} < n$, we have 
\[
\lambda^{i_{p+1}U_{p+1}V_{p+1} + \dots{} + i_{h-1}U_{h-1}V_{h-1} + uU_{h}V_{h}}\in \Lambda. 
\]
Then we have
\[
\lambda^{u_{p+1}U_{p}V_{p} + i_{p+1}U_{p+1}V_{p+1} + \dots{} + i_{h-1}U_{h-1}V_{h-1} + uU_{h}V_{h}} = \lambda^{(u_{p+1}-i_{p})U_{p}V_{p}}\cdot n
\]
\[
= \lambda^{i_{p+1}U_{p+1}V_{p+1} + \dots{} + i_{h-1}U_{h-1}V_{h-1} + uU_{h}V_{h}}\cdot \lambda^{u_{p+1}U_{p}V_{p}},
\]
In view of Lemma 2.2 with $S = A$ and $\underline{\mu} = \underline{\lambda}$, this is a contradiction.
 
 Finally, we have to show that $\lambda^{(u+1)U_{h}V_{h}}\in A \cup \Lambda$, but $\lambda^{(u+1)U_{h}V_{h}}\notin A \cap \Lambda$. We know that $A\cap [0,\lambda^{(u+1)U_{h}V_{h}}-1] = A_{\lambda}(u_{1}, v_{1}, u_{2}, v_{2}, \dots{} ,u_{h}, v_{h})$ and  $\Lambda\cap [0,\lambda^{(u+1)U_{h}V_{h}}-1] = \Lambda_{\lambda}(u_{1}, v_{1}, u_{2}, v_{2}, \dots{} ,u_{h}, v_{h}, u+1)$. Let $\Lambda_{\lambda}(u_{1}, v_{1}, u_{2}, v_{2}, \dots{} ,u_{h}, v_{h}, u+1) = \{\lambda_{1}, \dots{} ,\lambda_{z}\}$
 and $A_{\lambda}(u_{1}, v_{1}, u_{2}, v_{2}, \dots{} ,u_{h}, v_{h}) = \{a_{1}, \dots{} ,a_{t}\}$. Consider the following set
\begin{equation}
\{\lambda_{1}a^{(1)} + \dots{} + \lambda_{m}a^{(m)}: a^{(i)}\in  A\} \setminus 
\end{equation}
\[
\{\lambda_{1}a^{(1)} + \dots{} + \lambda_{z}a^{(z)}: a^{(i)}\in A_{\lambda}(u_{1}, v_{1}, u_{2}, v_{2}, \dots{} ,u_{h}, v_{h})\}.
\]
By Lemma 2.1, we have for every $0 \le n < \lambda^{(u+1)U_{h}V_{h}}$ can be written in the form 
\[
n = \lambda_{1}a^{(1)} + \dots{} + \lambda_{z}a^{(z)},
\]
where $a^{(i)}\in A_{\lambda}(u_{1}, v_{1}, u_{2}, v_{2}, \dots{} ,u_{h}, v_{h})$.
Moreover, if $n \ge \lambda^{(u+1)U_{h}V_{h}}$, then 
\[
n \neq \lambda_{1}a^{(1)} + \dots{} + \lambda_{z}a^{(z)},
\]
where $a^{(i)}\in A_{\lambda}(u_{1}, v_{1}, u_{2}, v_{2}, \dots{} ,u_{h}, v_{h})$.
This implies that the smallest element of the set in (1) is $\lambda^{(u+1)U_{h}V_{h}}$.
On the other hand, the smallest element of the set in (1) is either $\lambda_{1}a_{t+1} = a_{t+1}$ or  
$\lambda_{z+1}a_{2} = \lambda_{z+1}$. This implies that $\lambda^{(u+1)U_{h}V_{h}} \in A\cup \Lambda$.
If $\lambda^{(u+1)U_{h}V_{h}} \in A\cap \Lambda$, then in the representation $\lambda^{(u+1)U_{h}V_{h}} = \lambda_{1}\cdot  \lambda^{(u+1)U_{h}V_{h}} = \lambda^{(u+1)U_{h}V_{h}}\cdot a_{2}$.
In view of Lemma 2.2 with $S = A$ and $\underline{\mu} = \underline{\lambda}$, this is a contradiction.

\textbf{Proof of Claim 2.} 
We prove by contradiction. Suppose that there exists an integer $\lambda^{vU_{h+1}V_{h}} < n < \lambda^{(v+1)U_{h+1}V_{h}}$ such that 
\[
A \cap [0, n - 1] = A_{\lambda}(u_{1}, v_{1}, u_{2}, v_{2}, \dots{} ,u_{h}, v_{h}, u_{h+1}, v+1)\cap [0, n - 1], 
\]
\[
\Lambda \cap [0, n - 1] = \Lambda_{\lambda}(u_{1}, v_{1}, u_{2}, v_{2}, \dots{} ,u_{h}, v_{h}, u_{h+1})\cap [0, n - 1] 
\]
but 
\[
A \cap [0, n] \neq A_{\lambda}(u_{1}, v_{1}, u_{2}, v_{2}, \dots{} ,u_{h}, v_{h},  u_{h+1}, v+1)\cap [0, n], 
\]
or
\[
\Lambda \cap [0, n] \neq \Lambda_{\lambda}(u_{1}, v_{1}, u_{2}, v_{2}, \dots{} ,u_{h}, v_{h}, u_{h+1})\cap [0, n]. 
\]
Now let $\Lambda_{\lambda}(u_{1}, v_{1}, u_{2}, v_{2}, \dots{} ,u_{h}, v_{h}, u_{h+1})\cap [0, n] = \{\lambda_{1}, \dots{} ,\lambda_{d}\}$, $\lambda_{1} < \dots{} < \lambda_{d}$. We have the following four cases.

\textbf{Case 1.} $n\notin A_{\lambda}(u_{1}, v_{1}, u_{2}, v_{2}, \dots{} ,u_{h}, v_{h}, u_{h+1})$, but $n\in A$. Then $\lambda_{d} < n$. Hence,  $\lambda_{1}, \dots{} ,\lambda_{d}\in \Lambda$ and consider the representation
\[
n = \lambda_{1}a^{(1)} + \dots{} + \lambda_{d}a^{(d)}
\]
with $a^{(1)}, \dots{} ,a^{(d)}\in  A_{\lambda}(u_{1}, v_{1}, u_{2}, v_{2}, \dots{} ,u_{h}, v_{h}, u_{h+1}, v+1)$. Since $a^{(i)} < n$, we have $a^{(i)}\in  A$ for every $1 \le i \le d$. On the other hand, $n = \lambda_{1}\cdot n$, $n\in A$. In view of Lemma 2.2 with $S = A$ and $\underline{\mu} = \underline{\lambda}$, this is a contradiction.

\textbf{Case 2.} $n\notin A_{\lambda}(u_{1}, v_{1}, u_{2}, v_{2}, \dots{} ,u_{h}, v_{h}, u_{h+1})$, but $n\in \Lambda$. Then $\lambda_{d} < n$. Hence, $\lambda_{1}, \dots{} ,\lambda_{d}\in \Lambda$ and consider the representation
\[
n = \lambda_{1}a^{(1)} + \dots{} + \lambda_{d}a^{(d)}
\]
with $a^{(1)}, \dots{} ,a^{(d)}\in A_{\lambda}(u_{1}, v_{1}, u_{2}, v_{2}, \dots{} ,u_{h}, v_{h}, u_{h+1}, v+1)$. Since $a^{(i)} < n$, we have 
$a^{(1)}, \dots{} ,a^{(d)}\in  A$. On the other hand, $n = \lambda_{d+1} = \lambda_{d+1}\cdot a_{2}$.
In view of Lemma 2.2 with $S = A$ and $\underline{\mu} = \underline{\lambda}$, this is a contradiction.

\textbf{Case 3.} $n\in A_{\lambda}(u_{1}, v_{1}, u_{2}, v_{2}, \dots{} ,u_{h}, v_{h}, u_{h+1}, v+1)$, but $n\notin A\cup \Lambda$. Then in the representation 
$n = \lambda_{1}a^{(1)} + \dots{} + \lambda_{m}a^{(m)}$, $a^{(i)}\in  A$, we have $a^{(i)} = 0$ for $i > d$. Hence $n = \lambda_{1}a^{(1)} + \dots{} + \lambda_{d}a^{(d)}$,  $a^{(i)} < n$ and so $a^{(i)}\in  A_{\lambda}(u_{1}, v_{1}, u_{2}, v_{2}, \dots{} ,u_{h}, v_{h}, u_{h+1}, v+1)$ for every $1\le i \le d$. Furthermore, $n = \lambda_{1}\cdot n$, where $n\in A_{\lambda}(u_{1}, v_{1}, u_{2}, v_{2}, \dots{} ,u_{h}, v_{h},u_{h+1}, v+1)$. In view of Lemma 2.1 and Lemma 2.2 with $S = A_{\lambda}(u_{1}, v_{1}, u_{2}, v_{2}, \dots{} ,u_{h}, v_{h}, u_{h+1}, v+1)$ and
$\underline{\mu} = \Lambda_{\lambda}(u_{1}, v_{1}, u_{2}, v_{2}, \dots{} ,u_{h}, v_{h}, u_{h+1})$, this is a  contradiction.

\textbf{Case 4.}
$n\in A_{\lambda}(u_{1}, v_{1}, u_{2}, v_{2}, \dots{} ,u_{h}, v_{h}, u_{h+1}, v+1)$, and $n\in \Lambda$.
Then $n = \lambda_{d+1}$, $\lambda^{vU_{h+1}V_{h}} < n < \lambda^{vU_{h+1}V_{h}+1}$. Let 
\[
n = \sum_{i=0}^{vU_{h+1}V_{h}}\delta_{i}\lambda^{i}, 
\]
$0 \le \delta_{i} < \lambda$, $\delta_{vU_{h+1}V_{h}} \neq 0$. Then we have
\[
0 \le \lambda^{vU_{h+1}V_{h}+1} - \left \lfloor \frac{\lambda^{vU_{h+1}V_{h}+1}}{n} \right  \rfloor  \cdot n < n, 
\textnormal{ where } 1\le \left \lfloor \frac{\lambda^{vU_{h+1}V_{h}+1}}{n} \right  \rfloor < \lambda,
\]
thus we have 
\[
\lambda^{vU_{h+1}V_{h}+1} - \left \lfloor \frac{\lambda^{vU_{h+1}V_{h}+1}}{n} \right  \rfloor = \lambda_{1}a^{(1)} + \dots{} + \lambda_{d}a^{(d)},
\]
with $a^{(i)}\in  A$, and so
\[
\lambda^{vU_{h+1}V_{h}+1} = \lambda_{1}a^{(1)} + \dots{} + \lambda_{d}a^{(d)} + \lambda_{d+1}\cdot a_{\left \lfloor \frac{\lambda^{vU_{h+1}V_{h}+1}}{n} \right  \rfloor + 1}.
\]
On the other hand, it follows from the definition of $A_{\lambda}(u_{1}, v_{1}, u_{2}, v_{2}, \dots{} ,u_{h}, v_{h}, u_{h+1}, v+1)$ that $\lambda^{vU_{h+1}V_{h}} \in A_{\lambda}(u_{1}, v_{1}, u_{2}, v_{2}, \dots{} ,u_{h}, v_{h}, u_{h+1}, v+1)$. Since $\lambda^{vU_{h+1}V_{h}} < n$, and so $\lambda^{vU_{h+1}V_{h}}\in A$. Then we have $\lambda^{vU_{h+1}V_{h}+1} = \lambda_{2}\cdot \lambda^{vU_{h+1}V_{h}}$. In view of Lemma 2.2 with $S = A$ and $\underline{\mu} = \underline{\lambda}$, this is a contradiction. 

Finally, we have to show that $\lambda^{(v+1)U_{h+1}V_{h}}\in A \cup \Lambda$, but $\lambda^{(v+1)u_{h+1}U_{h}V_{h}}\notin A \cap \Lambda$. Let $\Lambda_{\lambda}(u_{1}, v_{1}, u_{2}, v_{2}, \dots{} ,u_{h}, v_{h}, u_{h+1}) = \{\lambda_{1}, \dots{} ,\lambda_{q}\}$, $\lambda_{1} < \dots{} < \lambda_{q}$. Consider the following set
\begin{equation}
\{\lambda_{1}a^{(1)} + \dots{} + \lambda_{m}a^{(m)}: a^{(i)}\in  A\} \setminus 
\end{equation}
\[
\{\lambda_{1}a^{(1)} + \dots{} + \lambda_{q}a^{(q)}: a^{(i)}\in A_{\lambda}(u_{1}, v_{1}, u_{2}, v_{2}, \dots{} ,u_{h}, v_{h}, u_{h+1}, v+1)\}.
\]
By Lemma 2.1, we have for every $0 \le n < \lambda^{(v+1)U_{h+1}V_{h}}$ can be written in the form 
\[
n = \lambda_{1}a^{(1)} + \dots{} + \lambda_{q}a^{(q)},
\]
where $a^{(i)}\in A_{\lambda}(u_{1}, v_{1}, u_{2}, v_{2}, \dots{} ,u_{h}, v_{h}, u_{h+1}, v+1)$ for every $1\le i \le q$.
Moreover, if $n \ge \lambda^{(v+1)U_{h+1}V_{h}}$ cannot be written in the form 
\[
n = \lambda_{1}a^{(1)} + \dots{} + \lambda_{q}a^{(q)},
\]
where $a^{(i)}\in A_{\lambda}(u_{1}, v_{1}, u_{2}, v_{2}, \dots{} ,u_{h}, v_{h}, u_{h+1}, v+1)$.
This implies that the smallest element of the set in (2) is $\lambda^{(v+1)U_{h+1}V_{h}}$.
Let $A_{\lambda}(u_{1}, v_{1}, u_{2}, v_{2}, \dots{} ,u_{h}, v_{h}, u_{h+1}, v+1) 
= \{a_{1}, \dots{} ,a_{l}\}$
Then obviously, the  the smallest element of the set in (2) is either $\lambda_{1}a_{l+1} = a_{l+1}$ or  
$\lambda_{d+1}a_{2} = \lambda_{d+1}$. This implies that $\lambda^{(v+1)U_{h+1}V_{h}} \in A\cup \Lambda$.
If $\lambda^{(v+1)U_{h+1}V_{h}} \in A\cap \Lambda$, then in the representation $\lambda^{(v+1)U_{h+1}V_{h}} = 1\cdot  \lambda^{(v+1)U_{h+1}V_{h}} = \lambda^{(v+1)U_{h+1}V_{h}}\cdot 1$ we have $1\in A\cap \Lambda$.
 In view of Lemma 2.2 with $S = A$ and $\underline{\mu} = \underline{\lambda}$, this is a contradiction.

\end{document}